\documentclass[letterpaper, 10 pt, conference]{ieeeconf}
\usepackage{array} 
\usepackage{stfloats}   
\usepackage{mathtools}
\usepackage{xcolor}
\usepackage[utf8]{inputenc}
\usepackage{tabularx}
\usepackage{booktabs} 
\usepackage{mathrsfs}
\usepackage{graphics} 

\usepackage{amsmath} 
\usepackage{amssymb}  
\usepackage{amsthm}
\usepackage{cite}
\usepackage{bm}
\usepackage{acronym}
\usepackage{paralist}
\usepackage{float}
\usepackage[dvipsnames]{xcolor}
\usepackage{epstopdf}
\usepackage{multicol}
\usepackage{tikz}
\usepackage{hyperref}
\usepackage{graphicx}
\usepackage{soul}
\usepackage{macros} 

\usepackage{anyfontsize}

\usepackage{stfloats}
\fnbelowfloat

\makeatletter
\hypersetup{colorlinks=true}
\AtBeginDocument{\@ifpackageloaded{hyperref}
  {\def\@linkcolor{blue}
  \def\@anchorcolor{red}
  \def\@citecolor{red}
  \def\@filecolor{red}
  \def\@urlcolor{black}
  \def\@menucolor{red}
  \def\@pagecolor{red}
\begingroup
  \@makeother\`%
  \@makeother\=%
  \edef\x{%
    \edef\noexpand\x{%
      \endgroup
      \noexpand\toks@{%
        \catcode 96=\noexpand\the\catcode`\noexpand\`\relax
        \catcode 61=\noexpand\the\catcode`\noexpand\=\relax
      }%
    }%
    \noexpand\x
  }%
\x
\@makeother\`
\@makeother\=
}{}}
\makeatother

\DeclareMathOperator{\divergence}{\mathrm{div}}
\DeclareMathOperator{\grad}{\mathrm{grad}}
\DeclareMathOperator{\interior}{\mathrm{int}}

\newcommand{\bequ}{\begin{eqnarray}}
\newcommand{\eequ}{\end{eqnarray}}

\IEEEoverridecommandlockouts

\def\BibTeX{{\rm B\kern-.05em{\sc i\kern-.025em b}\kern-.08em
    T\kern-.1667em\lower.7ex\hbox{E}\kern-.125emX}}

\definecolor{AA}{RGB}{34,139,34}

\definecolor{HM}{RGB}{200,00,00}

\begin{document}
\title{\LARGE \bf 
A Geometric Solution of the Schrödinger Bridge Problem on $\SO2$ via Stochastic Optimal Control

\thanks{This research has been supported by NJIT's startup funds.}
\thanks{The authors are with the 
Department of Mechanical \& Industrial Engineering at the New Jersey Institute of Technology, Newark, NJ 07102, USA (email: \texttt{\{hm576, adeel.akhtar\}@njit.edu}). 
}}
\author{Hamza Mahmood \and Adeel Akhtar}
\maketitle
\begin{abstract}
We present a geometric coordinate-free solution to the isotropic Schrödinger bridge problem (SBP) for the kinematic equation on the Lie group $\SO2$. We consider the angular velocity of the system as the control input and assume that the given initial and terminal state probability density functions defined on $\SO2$ in our SBP are continuous and strictly positive. We solve the SBP by proving the existence and uniqueness of a solution to the so-called Schrödinger system of equations on $\SO2$, by showing that a fixed-point recursion is contractive in a complete metric space with respect to the Hilbert's projective metric. The geometric controller thus designed only uses the intrinsic geometric structure of $\SO2$ and does not embed it in the Euclidean plane to achieve the optimal density control. The numerical simulation verifies the validity of the theoretical construction of the Schrödinger bridge. The code and animations are publicly available at \texttt{\href{https://gitlab.com/a5akhtar/sbp}{https://gitlab.com/a5akhtar/sbp}}.
\end{abstract}

\section{Introduction}
\label{sec:Introduction}
The Schrödinger bridge problem (SBP) is a stochastic optimal control problem that requires designing a controller that drives the controlled joint state probability density function (PDF) of a stochastic dynamical system from a prescribed initial PDF to a prescribed terminal PDF over a finite-time horizon while minimizing the expected control effort needed to achieve this steering. This density control is different from the usual control objective in the sense that the aim is to optimally steer the evolution of the state PDF rather than an individual state trajectory. Solving the SBP for stochastic systems finds applications in controlling physical populations such as robotic swarms~\cite{BanChuHad2017} and highway traffic densities~\cite{ChiZhaIoa1997}. 

Most robotic systems, such as drones and wheeled robots, have a Lie group as their state space. However, the SBPs considered in the existing literature predominantly address systems evolving on a Euclidean space, or they consider systems with state space embedded in a Euclidean space~\cite{CalHal2020}. The main aim of this paper is to design a geometric controller that solves the SBP for a system evolving on a particular compact Lie group, namely $\SO2$.

The subject of SBP is studied in the case of no prior dynamics in~\cite{Jam1975}, which is what is referred to as the ``classical SBP", and with prior dynamics in~\cite{CheGeoPav2016}. In these works, the density control is studied on Euclidean space. To the best of our knowledge, the SBP for systems on compact Lie groups has not been studied geometrically. One may argue that for an SBP for a system on a compact Lie group, one can embed the Lie group in some Euclidean space and employ the methods in the literature available to solve the SBP on that Euclidean space. However, designing a controller for an SBP on a compact Lie group while only working with the intrinsic geometric Lie group structure is an interesting problem in itself, which appears to be not yet studied in the literature. 
Studying an SBP associated with a stochastic system on a Lie group in a geometric setting offers a challenging problem with several practical applications. An example of such a geometric SBP, namely, the SBP for the kinematic equation on $\SO2$, is the main focus of this paper. 

In this article, we consider the stochastic differential equation (SDE) for the kinematic equation on the Lie group $\SO2$, that is, the kinematic equation together with Brownian noise. The SDE studied is in the Stratonovich sense~\cite{hsu2002}. 
The control input here is the angular velocity of the system with the controlled state lying on the Lie group. Consequently, the corresponding PDFs are defined on $\SO2$, and Haar measure is used to define the integration of functions, including PDFs, over $\SO2$. 
The SBP for the kinematic equation on $\SO2$ then becomes a problem of designing a geometric controller for the angular velocity of the system such that the controlled state PDF of the system moves from a given initial PDF on $\SO2$ to a given terminal PDF while also minimizing the control effort to do so.  
{To solve this challenging stochastic optimal control problem, our solution approach uses the idea of what} is called the Hilbert's projective metric~\cite{Bus1973} defined using a closed solid cone in a Banach space.  
Our solution leads to the geometric and coordinate-free controller for our SBP, and we underscore that our design uses the intrinsic geometric structure of $\SO2$ and does not embed it in the Euclidean plane.

Our main contributions are the following: (i) the proof of the existence and uniqueness of a solution to the Schrödinger system on $\SO2$ using the Hilbert's projective metric (Theorem~\ref{thm:Soln_Schrödinger_sys_SO2}) and (ii) consequently, a geometric coordinate-free solution \eqref{eq:Omega^{opt}_soln} to the isotropic SBP (Problem~\ref{problem:SBP_on_SO(2)}) for the kinematic equation on $\SO2$.

\subsection{Notation} 
\label{subsec:Notation} 

The set $\Real^n$ denotes the Euclidean space of dimension $n$. We write the set of all nonnegative real numbers as $\mathbb{R}_{\geq 0}$. The notations $|\cdot|$ and $\exp(\cdot)$ mean  the absolute value and the exponential of a real number, respectively. The Euclidean 2-norm of a vector in $\mathbb{R}^n$ is denoted with $\|\cdot\|_{2}$. The infimum (greatest lower bound) and the supremum (least upper bound) of a set $S \subset \mathbb{R}$ are written as $\inf S$ and $\sup S$, respectively. The interior of a subset $S$ of a topological space is denoted by $\interior S$. The composition of a map $F : X \to X$ with itself $k$ times is represented by $F^k$, i.e., ${F}^{k} = {F} \circ {F} \circ ... \circ {F}$ ($k$ times). The supremum norm $\|\cdot\|_{\infty}$ on the space $C(\SO2)$ of real-valued continuous functions on $\SO2$ is defined as $\|f\|_{\infty} \eqdef \sup_{R \in \SO2} |f(R)|$ for any $f \in C(\SO2)$. We denote by $B(f,r)$ a ball of radius $r>0$ centered at $f$ in the normed space $\left(C(\SO2),\|\cdot\|_{\infty}\right)$. The notation $f_{n} \xrightarrow[]{d_H} f$ means that the sequence $\{f_n\}$ converges to $f$ with respect to the Hilbert's projective metric $d_H$ defined in Definition~\ref{def:Hilbert_metric}, i.e., $\lim_{n \to \infty} d_H(f_n, f) = 0$. 
The remaining mathematical symbols used in this paper are defined when they first appear in their respective sections.

\section{Preliminaries}
\label{sec:Preliminaries}
In this section, we introduce the necessary concepts and key definitions needed to formulate our Schrödinger bridge problem (SBP). We refer the reader to~\cite{CalHal2022} for the background on classical SBP on $\mathbb{R}^n$.
\subsection{Kinematic Equation on \texorpdfstring{$\SO2$}{SO(2)}}
Consider the Lie group $\SO2$, which is a special orthogonal group of $2 \times 2$ real orthogonal matrices.
The Lie algebra of $\SO2$, denoted $\mathfrak{so}(2)$, consists of all $2 \times 2$ real skew-symmetric matrices.
Define a map $\hat{\cdot} : \mathbb{R} \to \mathfrak{so}(2)$ (called the hat map) by
\begin{equation}
\label{eq:hat_map}
\mathbb{R} \ni \Omega \mapsto 
\widehat{\Omega} = \begin{bmatrix} 
0 & -\Omega \\ 
\Omega & 0 \end{bmatrix} \in \mathfrak{so}(2). 
\end{equation}
Let $(\cdot)^{\vee} : \mathfrak{so}(2) \to \mathbb{R}$ denote the inverse of the hat map. 
Let \(R(t)\in \SO2\) be the system's configuration at time $t$, \(\dot R(t)\in T_{R(t)}\SO2\) its velocity, and \(\Omega(R,t)\in\mathbb R\) the angular velocity. The kinematics are given by
\begin{equation}
\label{eq:kinematic_eq_on_SO2}
\dot{R}(t)=R(t)\,\widehat{\Omega}(R,t),
\end{equation}
which is coordinate-free. Since \(\widehat{\Omega}(R,t)\in \mathfrak{so}(2)\), it follows that \(R(t)\widehat{\Omega}(R,t)\in T_{R(t)}\SO2\); therefore, the trajectory remains on \(\SO2\) at all times.

\subsection{Stratonovich Stochastic Differential Equation (SDE) on a Compact Lie Group}
We use the definition of Stratonovich SDE on a Riemannian manifold from~\cite{hsu2002} to define Stratonovich SDE on a compact Lie group below. We refer the reader to~\cite{gallier2020} for the concept of an Ad-invariant inner product on a Lie algebra and the concept of the induced bi-invariant Riemannian metric on a Lie group.   
\begin{definition}[Stratonovich SDE on a Compact Lie Group]
Let $G$ be a compact Lie group with Lie algebra $\mathfrak g$. Consider an Ad-invariant inner product $\langle -,- \rangle_{\mathfrak{g}}$ on $\mathfrak{g}$ and let $G$ be equipped with the induced bi-invariant Riemannian metric. Fix $m \in \mathbb N$, and let $(W_t)_{t\ge0}$ be an $\mathbb R^m$-valued standard Brownian motion. 
For each $i=0, 1,\dots,m$, let $V_i$ be a smooth vector field on $G$. 
A $G$-valued continuous semimartingale $(R_t)_{t\ge 0}$ is said to solve the \emph{Stratonovich stochastic differential equation (SDE)}
\[
dR_t \;=\; V_0(R_t)\,dt \;+\; \sum_{i=1}^m V_i(R_t)\circ dW_t^i, 
\qquad R_0 = x \in G,
\]
if for every smooth test function $f\in C^\infty(G)$, we have  
\[
f(R_t) = f(R_0) + \int_0^t (V_0 f)(R_s)\,ds + \sum_{i=1}^m \int_0^t (V_i f)(R_s)\circ dW_s^i.
\] Here $Vf$ denotes the directional derivative of $f$ along the vector field $V$, and the symbol $\circ$ indicates the Stratonovich stochastic integral.
\end{definition}
This definition is intrinsic and coordinate-free: the driving vector fields $V_i$ are geometric objects on $G$, and the solution $R_t$ remains on $G$ for all $t$. 
The symbol $\circ$ in the expression $V(R_t)\circ dW_t$ indicates that the stochastic integral is taken in the \emph{Stratonovich sense};  
see~\cite{bernt2013}. The Stratonovich formulation of SDEs is preferred on manifolds and Lie groups because it respects the ordinary chain rule of differential geometry. See~\cite{hsu2002}. 

\subsection{Stratonovich SDE for the Kinematic Equation on \texorpdfstring{$\SO2$}{SO(2)} }
\begin{definition}[Stratonovich SDE for the Kinematics on $\SO2$]
Fix $m \in \mathbb N$, and let $(W_t)_{t\ge0}$ be an $\mathbb R^m$-valued standard Brownian motion.   
Let $\Omega: \SO2 \times[0,1]\to\mathbb R$ be an angular velocity field and let $\{\sigma_i : \SO2 \to \Real \}_{i=1}^m$ be smooth scalar noise amplitudes (or take $\sigma_i$ constant for invariant noise). The \emph{Stratonovich SDE for the kinematic equation on $\SO2$} is of the following form:
\[
{\qquad
dR_t \;=\; R_t\,\widehat{\Omega}(R_t,t)\,dt \;+\; \sum_{i=1}^m R_t\,\widehat{\sigma}_i(R_t)\;\circ dW_t^i,
\qquad}
\]
where $R_0\in\SO2$ and $\circ$ denotes the Stratonovich integral.
\end{definition}
\subsection{The Laplace-Beltrami Operator and Haar Measure}
\begin{definition}[Laplace--Beltrami Operator on a Compact Lie Group] 
\label{def:LapBeltOperator}
Let $G$ be a compact Lie group and $\mathfrak{g}$ the Lie algebra of $G$\hspace{0.5mm}. Consider an Ad-invariant inner product $\langle -,- \rangle_{\mathfrak{g}}$ on $\mathfrak{g}$ and let $G$ be equipped with the induced bi-invariant Riemannian metric.  Then, with $G$ viewed as a Riemannian manifold, the \emph{Laplace--Beltrami operator} $\Delta_G$ on $G$ is defined as follows:  for all $f \in C^{\infty}(G)$,\, 
$
\Delta_G f \;\eqdef\; \divergence ( \grad f ).$
\end{definition}

We next define the Haar measure with respect to which the integration of functions over $\SO2$ is performed. We refer the reader to~\cite{Faraut2008} for the details.
\begin{definition}[Left Haar measure] 
\label{def:Haar_measure}
Let $G$ be a locally compact Lie group. A (non-zero) Radon measure $\mu$ on $G$ is called a \emph{left Haar measure} (or simply \emph{Haar measure}) if it is left invariant, that is, for every Borel set $E \subset G$ and for every $g \in G$,
$
\mu(gE)=\mu(E),
$
where $gE \eqdef \{\, gx \, | \, x \in E \,\}$. 
\end{definition}

\subsection{Closed Solid Cone and the Hilbert's Projective Metric}

\begin{definition}[Closed solid cone in a Banach space {\cite[Sec.~2]{Bus1973}}]
\label{def:closed_solid_cone} Let $\mathcal{X}$ be a real Banach space. A subset $\mathcal{K}$ of $\mathcal{X}$ is called a \emph{closed solid cone in} $\mathcal{X}$ if $\mathcal{K}$ is closed in $\mathcal{X}$ and satisfies the following properties:
\begin{enumerate}[(i)]
    \item the interior of $\mathcal{K}$, $\interior \mathcal{K}$, is not empty,
    \item $\mathcal{K} + \mathcal{K} \subset \mathcal{K}$, 
    \item for all real $\lambda \geq 0$, $\lambda \mathcal{K} \subset \mathcal{K}$, and 
    \item $\mathcal{K} \cap -\mathcal{K} = \{\mathbf{0}\},$ where $\mathbf{0}$ is the zero element of $\mathcal{X}$. 
\end{enumerate}
\end{definition}

\begin{definition}[Hilbert's projective metric {\cite[Def.~2.1 and Def.~2.2]{Bus1973}}]
\label{def:Hilbert_metric}
Let $\mathcal{X}$ be a real Banach space, and let $\mathcal{K}$ be a closed solid cone in $\mathcal{X}$. The cone $\mathcal{K}$ induces a partial order relation $\preceq$ in $\mathcal{X}$ as follows: for any $\mathrm{\mathbf{x}},\mathrm{\mathbf{y}} \in \mathcal{X}$, 
\begin{equation}
\label{eq:def_partial_order_rel}
\mathrm{\mathbf{x}} \preceq \mathrm{\mathbf{y}} \,\Leftrightarrow\, \mathrm{\mathbf{y}} - \mathrm{\mathbf{x}} \in \mathcal{K}\,.     
\end{equation}
For any $\mathrm{\mathbf{x}}, \mathrm{\mathbf{y}} \in \mathcal{K}^{+} \eqdef \mathcal{K} \setminus \{\mathbf{0}\}$, define $M(\mathbf{x}, \mathbf{y}) \eqdef \inf \left\{ \lambda \mid \mathbf{x} \preceq \lambda \mathbf{y} \right\} $ and $m(\mathbf{x}, \mathbf{y}) \eqdef \sup \left\{ \lambda \mid \lambda \mathbf{y} \preceq \mathbf{x} \right\}$ with the convention $\inf \emptyset = \infty$\,. Then the \emph{Hilbert's projective metric} $d_H(\cdot,\cdot)$ is defined on $\mathcal{K}^{+}$ by \begin{equation}
\label{eq:Hilbert_proj_metric}
d_H(\mathbf{x}, \mathbf{y}) := \log\left( \frac{M(\mathbf{x}, \mathbf{y})}{m(\mathbf{x}, \mathbf{y})} \right).
\end{equation}   
\end{definition}
\begin{remark}
\label{rem:d_H_properties}
From Definition~\ref{def:Hilbert_metric}, it can be shown that for any $\mathbf{x}, \mathbf{y} \in \interior \mathcal{K}$ and any $\lambda, \mu > 0$, $d_H(\mathbf{x},\mathbf{y})$ is finite (see proof of Theorem 2.1 in \cite{Bus1973}) and $d_H(\lambda\mathbf{x},\mu\mathbf{y}) = d_H(\mathbf{x},\mathbf{y})$ (Lemma 2.2 in \cite{Bus1973}).  
\end{remark}

The Hilbert's projective metric $d_H$ restricted to a particular subset of $\mathcal{K}^{+}$ defines a metric space. This is given in the following proposition~\cite[Theorem 2.1]{Bus1973}. 
\begin{proposition}
\label{prop:metric_space}
Let $(\mathcal{X}, \|\cdot\|)$ be a real Banach space and $\mathcal{K}$ a closed solid cone in $\mathcal{X}$ with the partial order relation $\preceq$ in $\mathcal{X}$ induced by $\mathcal{K}$, defined in \eqref{eq:def_partial_order_rel}. Let $U$ denote the unit sphere in $\mathcal{X}$, that is, 
$
U \eqdef \{\,\mathbf{x} \in \mathcal{X} \,\,|\,\, \| \mathbf{x} \| = 1\, \}\,.  
$
Then $\left(\,U \cap \interior \mathcal{K}  \,,\,d_H\right)$ is a metric space. 
\end{proposition}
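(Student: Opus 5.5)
To prove Proposition~\ref{prop:metric_space}, I will check the metric axioms for $d_H$ on $U \cap \interior \mathcal{K}$ in the following order: finiteness and nonnegativity, symmetry, the triangle inequality, and finally definiteness. The first task is to show that, for $\mathbf{x},\mathbf{y}\in\interior\mathcal{K}$, one has $0<m(\mathbf{x},\mathbf{y})\le M(\mathbf{x},\mathbf{y})<\infty$.

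\emph{Finiteness and positivity.} Since $\mathbf{y}\in\interior\mathcal{K}$, there is $r>0$ with $B(\mathbf{y},r)\subset\mathcal{K}$, where $B$ is the norm ball. For $\mathbf{x}\neq\mathbf{0}$, the element $\mathbf{y}-\tfrac{r}{2\|\mathbf{x}\|}\mathbf{x}$ lies in $\mathcal{K}$. Scaling by property (iii) gives $\mathbf{x}\preceq \tfrac{2\|\mathbf{x}\|}{r}\mathbf{y}$, so $M<\infty$. Exchanging the roles of $\mathbf{x}$ and $\mathbf{y}$ gives $m>0$.

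\emph{Nonnegativity.} If $\lambda\mathbf{y}\preceq\mathbf{x}\preceq\mu\mathbf{y}$, then transitivity of $\preceq$ (which follows from (ii)) gives $(\mu-\lambda)\mathbf{y}\in\mathcal{K}$. If $\mu<\lambda$, we would get $\mathbf{y}\in\mathcal{K}\cap-\mathcal{K}=\{\mathbf{0}\}$ by (iii) and (iv), which is a contradiction. Hence $m\le M$ and $d_H\ge 0$.

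\emph{Symmetry and the triangle inequality.} Symmetry follows from the identity $M(\mathbf{y},\mathbf{x})=1/m(\mathbf{x},\mathbf{y})$, which holds because $\lambda\mathbf{x}\preceq\mathbf{y}$ iff $\mathbf{x}\preceq\lambda^{-1}\mathbf{y}$ for $\lambda>0$. For the triangle inequality, transitivity gives
\[
M(\mathbf{x},\mathbf{z})\le M(\mathbf{x},\mathbf{y})M(\mathbf{y},\mathbf{z}), \qquad m(\mathbf{x},\mathbf{z})\ge m(\mathbf{x},\mathbf{y})m(\mathbf{y},\mathbf{z}).
\]
(Closedness of $\mathcal{K}$ ensures the infimum and supremum are attained, though this is not strictly needed.) Taking logarithms of the ratio then yields $d_H(\mathbf{x},\mathbf{z})\le d_H(\mathbf{x},\mathbf{y})+d_H(\mathbf{y},\mathbf{z})$.

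\emph{Definiteness.} This is the only place where the restriction to the unit sphere $U$ enters, and I expect it to be the main obstacle. On $\interior\mathcal{K}$ alone, $d_H$ is only a pseudometric, since $d_H(\lambda\mathbf{x},\mu\mathbf{y})=d_H(\mathbf{x},\mathbf{y})$ by Remark~\ref{rem:d_H_properties}. Suppose $d_H(\mathbf{x},\mathbf{y})=0$, so that $m=M=:\lambda$. Since $\mathcal{K}$ is closed, the sets $\{\mu : \mu\mathbf{y}\preceq\mathbf{x}\}$ and $\{\mu : \mathbf{x}\preceq\mu\mathbf{y}\}$ are closed. Therefore $\lambda\mathbf{y}\preceq\mathbf{x}\preceq\lambda\mathbf{y}$, i.e.\ $\mathbf{x}-\lambda\mathbf{y}\in\mathcal{K}\cap-\mathcal{K}=\{\mathbf{0}\}$. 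Thus $\mathbf{x}=\lambda\mathbf{y}$ with $\lambda>0$. Taking norms and using $\|\mathbf{x}\|=\|\mathbf{y}\|=1$ forces $\lambda=1$, so $\mathbf{x}=\mathbf{y}$. Conversely, $d_H(\mathbf{x},\mathbf{x})=\log(1/1)=0$, which completes the verification of the axioms.
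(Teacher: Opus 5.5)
Your proof is correct and is essentially the standard argument behind Bushell's Theorem~2.1 \cite{Bus1973}; the paper cites that theorem rather than proving the proposition. The structure matches: finiteness of $M$ and positivity of $m$ come from a norm ball inside $\mathcal{K}$, the pseudometric axioms come from properties (ii)--(iv), and definiteness comes from the closedness of $\mathcal{K}$ together with the normalization $\|\mathbf{x}\|=\|\mathbf{y}\|=1$. One minor point to state explicitly in the symmetry step: by your bounds $0<m\le M<\infty$, restricting to positive multipliers changes neither the infimum defining $M$ nor the supremum defining $m$, and this is what justifies the reciprocal identity $M(\mathbf{y},\mathbf{x})=1/m(\mathbf{x},\mathbf{y})$.
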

\section{Problem Formulation}
\label{sec:Problem_formulation}
In this section, inspired by the classical SBP on the Euclidean space, we consider an {isotropic} SBP for the kinematic equation $\dot{R}(t) = R(t) \hat{\Omega}(R,t)$ on the Lie group $\SO2$ as follows: 
\begin{problem}
\label{problem:SBP_on_SO(2)}
Let $\mathcal{U}$ be the set of admissible controls of finite energy to a system evolving on $\SO2$, that is, 
\begin{equation}
\label{eq:Controls}
\begin{aligned}
    \mathcal{U} \eqdef 
    \biggl\{ {\Omega} : \SO2 \times [0,1] &\rightarrow \mathbb{R} \;\bigg|\;
    \text{for each } t\in[0,1], \\[2pt]
    &\int_{\SO2} |\Omega(R,t)|^2\, d\mu(R) < \infty
    \biggr\}.
\end{aligned}
\end{equation}
Construct a controller $\Omega \in \mathcal{U}$ that solves the following minimum energy {stochastic optimal control problem}:
\begin{subequations}
\label{eq:SBP_on_SO(2)}
\begin{align}
\inf_{\Omega \in \mathcal{U}} \quad \quad &\mathbb{E} \left\{ \int_{0}^{1} \frac{1}{2} |\Omega(R,t)|^{2} \, \mathrm{d}t \right\} \label{1st_SBP_on_SO(2)} \\
\mathrm{subject\,\,to} \quad 
&\mathrm{d}R(t) = R(t)\, \widehat{\Omega}(R,t)\, \mathrm{d}t \notag \\ 
&\;\;\;\;\;\;\;\;\;\;+ \sigma\sum_{i=1}^{{m}} {R(t) B_i \; \circ \; }     \mathrm{d}w^i(t), \label{2nd_SBP_on_SO(2)} \\
&R(t=0) \sim \rho_{0}(R), \quad R(t=1) \sim \rho_{1}(R) \label{3rd_SBP_on_SO(2)}
\end{align}
\end{subequations}
where for each $i \in \{1,2, \dots ,m\}$, $B_i \in \mathfrak{so}(2)$ is constant, $w^i(t)$ is the standard Wiener process, and $\sigma > 0$ is the {diffusion strength}. The given initial and terminal joint state PDFs on $\SO2$, with finite second moments {with respect to the Haar (volume) measure $\mathrm{d}\mu(R)$}, are $\rho_0$ and $\rho_1$, respectively, i.e., $\rho_0, \rho_1 \in \mathcal{P}_2(\SO2)$ where
\begin{align}
\label{eq:P_2_SO(2)}
&\mathcal{P}_2(\SO2) \notag \\  
&\eqdef \biggl\{ \rho : \SO2 \rightarrow \mathbb{R}_{\geq 0} \;\bigg|\,
\int_{\SO2} \rho \, \mathrm{d}\mu(R) = 1, \notag \\ \;
&\;\;\;\;\;\;\;\;\;\;\;\;\;\;\;\;\;\;\;\;\;\;\;\;\;\;\;\;\;\;\;\;\;\;\;\;\;\,\int_{\SO2} {d(R,R_0)^2} \, \rho \, \mathrm{d}\mu(R) < \infty \biggr\},
\end{align} 
where $d(R,R_0)$ is the Riemannian distance of point $R \in \SO2$ from a fixed reference point $R_0 \in \SO2$. The expectation operator in \eqref{1st_SBP_on_SO(2)} is with respect to the controlled joint state PDF $\rho$\,, that is, $\mathbb{E}\{\cdot\} = \int_{\SO2} (\cdot) \, \rho \, \mathrm{d}\mu(R)$\,.
\end{problem}
Equivalently, we can describe the variational version of Problem \ref{problem:SBP_on_SO(2)} as follows:
\begin{subequations}
\label{eq:SBP_on_SO(2)_varprob}
\begin{align}
\inf_{\rho, \Omega} \quad \quad \quad & \int_0^1 \int_{\SO2} \frac{1}{2} |\Omega(R,t)|^2 \, \rho(R,t) \, \mathrm{d}\mu(R) \, \mathrm{d}t \label{1st_SBP_on_SO(2)_varprob} \\
\text{subject to} \quad &\partial_t \rho(R,t) = -\mathrm{div}(\rho(R,t) R(t) \widehat{\Omega}(R,t)) \notag \\
&\;\;\;\;\;\;\;\;\;\;\;\;\;\;\;\;\;\;\;\;\;\;\;\;\;\;\;\;\,+ \frac{\sigma^2}{2} \Delta_{\SO2} \rho(R,t),
\label{2nd_SBP_on_SO(2)_varprob} \\
&\rho(R,0) = \rho_0(R), \qquad \rho(R,1) = \rho_1(R), \label{3rd_SBP_on_SO(2)_varprob}
\end{align}
\end{subequations}
where \eqref{2nd_SBP_on_SO(2)_varprob} is the Fokker-Planck or Kolmogorov’s forward PDE associated with the controlled SDE \eqref{2nd_SBP_on_SO(2)}. The task is to find a controller $\Omega \in \mathcal{U}$ for the angular velocity of the system such that the PDF $\rho$, corresponding to $\Omega$, satisfies \eqref{2nd_SBP_on_SO(2)_varprob} and \eqref{3rd_SBP_on_SO(2)_varprob}, and this $\rho$ together with $\Omega$ minimizes the energy \eqref{1st_SBP_on_SO(2)_varprob}.  

\section{A Geometric Coordinate-free Solution}
\label{sec:schrodinger-bridge-geom}
In this section, we prove the existence and uniqueness of a solution to the Schrödinger system on $\SO2$. This will provide us with a geometric coordinate-free expression for the controller which will solve Problem~\ref{problem:SBP_on_SO(2)}.   
Consider the compact Lie group $\SO2$ with its Lie algebra $\mathfrak{so}(2)$. Consider an Ad-invariant inner product $\langle -,- \rangle_{\mathfrak{so}(2)}$ on $\mathfrak{so}(2)$ and let $\SO2$ be equipped with the induced bi-invariant Riemannian metric. 
 Let $\mu$ be the normalized Haar probability measure (the Riemannian volume), and $\Delta_{\SO2}$ the Laplace–Beltrami operator on $\SO2$. 
Fix the {diffusion strength} \(\sigma>0\) and put
\begin{equation}
\label{eq:gen_isotropic_Brownian_motion}
 L \;=\; \tfrac{\sigma^2}{2}\,\Delta_{\SO2},
\end{equation}
the generator of the isotropic Brownian motion on \(\SO2\).
Denote by \((T_t)_{0 \leq t \leq 1}= (e^{t L})_{0 \leq t \leq 1}\) the heat semigroup on \(L^2(\SO2,\mu)\) and by \(k_t:\SO2 \times \SO2 \to\mathbb R_{\geq 0}\) the heat kernel, i.e., for any $f\in L^2(\SO2)$ and for each $t \in [0,1]$,
\begin{equation}
\label{eq:(T_t f)(R)}
(T_t f)(R) \;=\; \int_{\SO2} k_t(R,\tilde{R})\,f(\tilde{R})\,d\mu(\tilde{R})\, .
\end{equation} 
Since $\SO2$ is connected, the heat kernel $k_t$ is strictly positive~\cite[Corollary 8.12]{Grig2012}. 
Let $C(\SO2)$ denote the space of real-valued continuous functions defined on $\SO2$. The space $C(\SO2)$ together with the sup-norm $\|\cdot\|_{\infty}$ (here, it is also a max-norm because $\SO2$ is compact) is a real Banach space~\cite{Piotr2007}.
 We now define the following sets:
\begin{align*}
    &C_{\geq 0}(\SO2) \eqdef \Bigl\{f \in C(\SO2) \Big|  \forall   R \in \SO2, f(R) \geq 0 \Bigr\}, 
    \\
    \notag\\
    &C_{+}(\SO2) \eqdef \Bigl\{ f \in C(\SO2) \Big|  \forall  R \in \SO2, f(R) > 0 \Bigr\}.
\end{align*} 

 \begin{remark}
 \label{rem:C+_nonempty}
 Note that $C_{+}(\SO2)$ is nonempty: the function $u : \SO2 \to \mathbb{R}$ defined by $u(R) = 1$ for all $R \in \SO2$ is clearly in $C_{+}(\SO2)$\,. 
 \end{remark}
 We now state a few important lemmas that we will need to prove our main result, Theorem \ref{thm:Soln_Schrödinger_sys_SO2}. Their proofs can be found in the Appendix~\ref{sec:Appendix}.

\begin{lemma}[Interior of \(C_{\geq 0}(\SO2)\)]
\label{lem:intC{geq 0}}
    In the real Banach space $\left(C(\SO2), \|\cdot\|_{\infty} \right)$, we have  
\( \interior C_{\geq 0}(\SO2) = C_{+}(\SO2).\)
\end{lemma}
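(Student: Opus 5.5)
The plan is to prove the two inclusions $C_{+}(\SO2) \subset \interior C_{\geq 0}(\SO2)$ and $\interior C_{\geq 0}(\SO2) \subset C_{+}(\SO2)$ separately. In both directions the only facts needed are that $\SO2$ is a compact (Hausdorff) topological space and that the topology on $C(\SO2)$ comes from the sup-norm $\|\cdot\|_{\infty}$.

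For the first inclusion, fix $f \in C_{+}(\SO2)$. Since $f$ is continuous and $\SO2$ is compact, the extreme value theorem gives a point $R^\ast$ with $f(R^\ast) = \min_{R \in \SO2} f(R)$. Set $r \eqdef f(R^\ast)$; then $r > 0$ because $f$ is strictly positive. Now take any $g \in B(f, r)$. For every $R \in \SO2$,
\[
g(R) \geq f(R) - \|g - f\|_{\infty} > r - r = 0 .
\]
Hence $B(f,r) \subset C_{+}(\SO2) \subset C_{\geq 0}(\SO2)$, so $f$ is an interior point of $C_{\geq 0}(\SO2)$. Compactness is essential here. Without it the infimum of $f$ could be $0$ even though $f>0$ pointwise, and this is the only place where any real care is required.

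For the reverse inclusion, I argue by contrapositive. Let $f \in C_{\geq 0}(\SO2) \setminus C_{+}(\SO2)$, so there is a point $R_0$ with $f(R_0) = 0$. For any $\varepsilon > 0$, define $g \eqdef f - \tfrac{\varepsilon}{2} u$, where $u \equiv 1$ is the function from Remark~\ref{rem:C+_nonempty}. Then $g \in C(\SO2)$ and $\|g - f\|_{\infty} = \varepsilon/2 < \varepsilon$, so $g \in B(f,\varepsilon)$. On the other hand $g(R_0) = -\varepsilon/2 < 0$, so $g \notin C_{\geq 0}(\SO2)$. Since every ball around $f$ leaves $C_{\geq 0}(\SO2)$, $f$ is not an interior point. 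Therefore $\interior C_{\geq 0}(\SO2) \subset C_{+}(\SO2)$, and combining the two inclusions gives the claimed equality.

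I do not expect a genuine obstacle. The only subtle step is invoking compactness of $\SO2$ to obtain a strictly positive lower bound $r$ in the first inclusion; everything else is a direct $\varepsilon$-ball computation in the sup-norm.
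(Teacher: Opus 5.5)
Your proof is correct. The positive minimum of $f$, which exists by compactness, gives a ball $B(f,r)\subset C_{+}(\SO2)$, and subtracting $\tfrac{\varepsilon}{2}u$ at a zero of $f$ shows that any nonnegative function vanishing somewhere is not an interior point. The paper gives no in-text argument for this lemma (it defers to Lemma 1 of a cited companion paper), so your self-contained two-inclusion argument is the standard one and supplies what the paper leaves out.
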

With Lemma~\ref{lem:intC{geq 0}}, it follows from Definition~\ref{def:closed_solid_cone} that $C_{\geq 0}(\SO2)$ is a closed solid cone in the Banach space $\left(C(\SO2), \|\cdot\|_{\infty} \right)$. Let $U$ be the unit sphere in this Banach space. From Lemma~\ref{lem:intC{geq 0}} and Proposition~\ref{prop:metric_space}, we see that $\left(C_{+}(\SO2) \cap U, d_H\right)$ is a metric space. The following lemma states that this metric space is complete.

\begin{lemma}
\label{lem:Complete_metric_space}
    Let $U$ be the unit sphere in the Banach space $\left(C(\SO2), \|\cdot\|_{\infty} \right)$, that is,  
        \( U \eqdef \{\, f \in C(\SO2) \,\,| \,\, \| f\|_{\infty} = 1\,\}.\)
Then $\left(C_{+}(\SO2) \cap U, d_H\right)$ is a complete metric space.  
\end{lemma}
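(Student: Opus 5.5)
Our plan is to compute $d_H$ explicitly on $C_{+}(\SO2)$ and then compare it with the sup-norm, so that completeness of $\left(C(\SO2),\|\cdot\|_\infty\right)$ can be transferred. For $f,g \in C_{+}(\SO2)$ the order $f \preceq \lambda g$ means $f(R) \le \lambda g(R)$ for all $R$. By compactness of $\SO2$ and strict positivity, this gives
\[
M(f,g)=\max_{R} \frac{f(R)}{g(R)}, \qquad m(f,g)=\min_{R}\frac{f(R)}{g(R)},
\]
and hence
\[
d_H(f,g)=\max_R \log\frac{f(R)}{g(R)}-\min_R \log\frac{f(R)}{g(R)} .
\]
This is the oscillation seminorm of $\log f-\log g$. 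On the unit sphere $\|f\|_\infty=\|g\|_\infty=1$, the ratio $f/g$ cannot be everywhere strictly greater than $1$, nor everywhere strictly less than $1$. Indeed, at a point $R^*$ with $g(R^*)=1$ we get $f(R^*)/g(R^*)\le 1$, and at a point with $f=1$ we get $f/g\ge 1$. Therefore $\max\log(f/g)\ge 0\ge\min\log(f/g)$, and so
\[
\|\log f-\log g\|_\infty \le d_H(f,g).
\]

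Now take a $d_H$-Cauchy sequence $\{f_n\}$ in $C_{+}(\SO2)\cap U$. By the inequality above, $\{\log f_n\}$ is Cauchy in $\left(C(\SO2),\|\cdot\|_\infty\right)$. It therefore converges uniformly to some continuous $h$, using completeness of this Banach space. Set $f \eqdef e^{h}$. Then $f$ is continuous and strictly positive, so $f\in C_{+}(\SO2)$.

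Since $\exp$ is Lipschitz on the bounded range of the $\log f_n$ (these are uniformly bounded because the sequence is Cauchy), we get $f_n\to f$ uniformly. Hence $\|f\|_\infty=\lim\|f_n\|_\infty=1$, and $f\in U$. Finally,
\[
d_H(f_n,f)\le 2\|\log f_n-h\|_\infty\to 0,
\]
so $f_n \xrightarrow[]{d_H} f$ and the space is complete.

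The main obstacle is the first step: justifying rigorously that the inf/sup in Definition~\ref{def:Hilbert_metric} are attained as the max/min of the ratio. This uses compactness of $\SO2$ and $g>0$. We also need the sphere normalization to get the two-sided comparison $\|\log f-\log g\|_\infty\le d_H(f,g)\le 2\|\log f-\log g\|_\infty$. Everything else is routine.
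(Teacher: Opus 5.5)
Your proof is correct, but it takes a different route from the paper. The paper gives no direct argument. It simply cites Bushell's general theorems on Hilbert's metric for a closed solid (normal) cone in a Banach space. That abstract theory shows that $d_H$-Cauchy sequences in $U\cap\interior\mathcal{K}$ are norm-Cauchy, and that their norm limits stay in $U\cap\interior\mathcal{K}$ and are also $d_H$-limits.

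You instead exploit the concrete structure of $C(\SO2)$. By compactness and strict positivity, $d_H(f,g)=\operatorname{osc}(\log f-\log g)$. Combined with the sphere normalization, this gives $\|\log f-\log g\|_\infty\le d_H(f,g)\le 2\|\log f-\log g\|_\infty$. So $\log$ maps $(C_{+}(\SO2)\cap U,d_H)$ bi-Lipschitzly onto $\{h\in C(\SO2) : \max h=0\}$, which is a closed subset of the Banach space $(C(\SO2),\|\cdot\|_\infty)$. Your explicit limit argument is exactly the completeness of this closed set.

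The citation route buys generality: it works in any Banach space with a normal solid cone. Its cost is that it is opaque, and its hypotheses are left implicit in the paper. For instance, normality of $C_{\geq 0}(\SO2)$ holds because $0\le f\le g$ implies $\|f\|_\infty\le\|g\|_\infty$, but the paper does not say so.

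Your argument is self-contained and works verbatim on $C(X)$ for any compact $X$. It also gives two by-products. First, the metric axioms on $C_{+}(\SO2)\cap U$ become immediate. Second, it proves Lemma~\ref{lem:R_f_isometry} at once: $\log(f/g_1)-\log(f/g_2)=-(\log g_1-\log g_2)$, and the oscillation is unchanged by a sign flip.
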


\begin{lemma}
\label{lem:R_f_isometry}
Fix any $f \in C_{+}(\SO2)$\,. The map $R_f : C_{+}(\SO2)   \to C_{+}(\SO2)$ defined by  
\(R_f(g) \eqdef {f}/{g}\) 
(\,with $f/g$ understood as $\left(f/g\right)(x) = f(x)/g(x)$\,) is an isometry with respect to the Hilbert's projective metric $d_H$, that is, for any $g_1, g_2 \in C_{+}(\SO2) $\,, 
$
    d_H(R_f(g_1), R_f(g_2)) = d_H(g_1, g_2).
$
\end{lemma}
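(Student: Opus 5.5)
The plan is to compute $M$ and $m$ explicitly for the cone $C_{\geq 0}(\SO2)$ and then observe that inverting the ratio of two functions swaps the roles of $\sup$ and $\inf$.

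First, note that $R_f$ is well defined. If $f,g\in C_{+}(\SO2)$, then $f/g$ is continuous and strictly positive, so it lies in $C_{+}(\SO2)$.

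Next, for $g_1,g_2\in C_{+}(\SO2)$ we compute $M(g_1,g_2)$ and $m(g_1,g_2)$ from Definition~\ref{def:Hilbert_metric} with the order induced by $C_{\geq 0}(\SO2)$. The relation $g_1\preceq \lambda g_2$ means $\lambda g_2(R)-g_1(R)\geq 0$ for all $R$. Since $g_2>0$, this is equivalent to $\lambda\geq g_1(R)/g_2(R)$ for all $R$. Hence
\[
M(g_1,g_2)=\sup_{R\in\SO2}\frac{g_1(R)}{g_2(R)}, \qquad m(g_1,g_2)=\inf_{R\in\SO2}\frac{g_1(R)}{g_2(R)}.
\]
Because $\SO2$ is compact and $g_1/g_2$ is continuous and positive, both quantities are attained maxima and minima and lie in $(0,\infty)$. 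This is consistent with Remark~\ref{rem:d_H_properties}. Therefore
\[
d_H(g_1,g_2)=\log\frac{\max_R (g_1/g_2)(R)}{\min_R (g_1/g_2)(R)}.
\]

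We then apply this formula to $R_f(g_1)=f/g_1$ and $R_f(g_2)=f/g_2$. The pointwise ratio is
\[
\frac{f/g_1}{f/g_2}=\frac{g_2}{g_1}=\Bigl(\frac{g_1}{g_2}\Bigr)^{-1}.
\]
For a positive function $h$ we have $\max(1/h)=1/\min h$ and $\min(1/h)=1/\max h$. Applying this with $h=g_1/g_2$ gives
\[
\frac{\max(g_2/g_1)}{\min(g_2/g_1)}=\frac{\max(g_1/g_2)}{\min(g_1/g_2)},
\]
which yields the claimed equality $d_H(R_f(g_1),R_f(g_2))=d_H(g_1,g_2)$.

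No step here is a real obstacle. The only point requiring care is the characterization of $M$ and $m$ through the pointwise sup and inf of the ratio, which relies on $g_2$ being strictly positive. Compactness of $\SO2$ is what guarantees these values are finite and positive.
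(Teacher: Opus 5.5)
Your proof is correct and is essentially the paper's argument made explicit. The paper combines the invariance of $d_H$ under multiplication by a fixed positive $f$ with the identity $M(\mathbf{x}_1,\mathbf{x}_2) = \bigl(m(\mathbf{x}_1^{-1},\mathbf{x}_2^{-1})\bigr)^{-1}$, and your formulas $M(g_1,g_2)=\max(g_1/g_2)$, $m(g_1,g_2)=\min(g_1/g_2)$ deliver both facts at once, since $f$ cancels in the pointwise ratio and taking reciprocals swaps max and min (you spell out only the derivation for $M$, but the one for $m$ is the same argument with the inequality reversed).
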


\begin{remark}
\label{rem:T_1_Dom&Codom_C+}
Since the heat kernel $k_t$ is strictly positive on $\SO2 \times \SO2$, it follows from \eqref{eq:(T_t f)(R)} that for any $f \in C_{+}(\SO2)$, we have for all $R \in \SO2, (T_1 f)(R) > 0$. Also, for any $f \in L^{2}(\SO2)$ and $t>0$, we have $T_t f \in C^{\infty}(\SO2)$ ~\cite[Theorem 7.6]{Grig2012}. Hence, for any $f \in C_{+}(\SO2)$, we get $T_1 f \in C_{+}(\SO2)$.
\end{remark}
With the above remark in mind, we have the following important result:
\begin{lemma}
\label{lem:T_1_str_contraction}
There exists a constant $c \in [0,1)$ such that for all $f_1, f_2 \in C_{+}(\SO2)$\,, we have  
$
d_H(T_1 f_1, T_1 f_2) \leq c \,\, d_H(f_1, f_2).
$
\end{lemma}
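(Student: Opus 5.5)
The plan is to use the Birkhoff--Bushell contraction theorem for positive linear operators on cones, as in \cite{Bus1973}. Let $T:\mathcal{X}\to\mathcal{X}$ be linear with $T(\mathcal{K}^+)\subset\mathcal{K}^+$. Define the projective diameter $\Delta(T)\eqdef\sup\{d_H(T\mathbf{x},T\mathbf{y})\mid \mathbf{x},\mathbf{y}\in\mathcal{K}^+\}$. Then for all $\mathbf{x},\mathbf{y}\in\mathcal{K}^+$,
\[
d_H(T\mathbf{x},T\mathbf{y})\le \tanh\!\left(\tfrac{\Delta(T)}{4}\right) d_H(\mathbf{x},\mathbf{y}).
\]
We apply this with $\mathcal{X}=C(\SO2)$, $\mathcal{K}=C_{\geq 0}(\SO2)$ and $T=T_1$. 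By Lemma~\ref{lem:intC{geq 0}} the cone is closed and solid. The lemma then follows with $c\eqdef\tanh(\Delta(T_1)/4)$, provided $\Delta(T_1)<\infty$, since then $c\in[0,1)$.

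\textbf{Step 1 (positivity and linearity).} The map $T_1$ is linear by \eqref{eq:(T_t f)(R)}. It maps $C_{\geq 0}(\SO2)\setminus\{0\}$ into $C_+(\SO2)$: if $f\ge 0$ is continuous and not identically zero, it is positive on an open set of positive Haar measure, and $k_1>0$ everywhere, so $T_1f>0$. Smoothness of $T_1 f$ follows from Remark~\ref{rem:T_1_Dom&Codom_C+}.

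\textbf{Step 2 (finite projective diameter).} The kernel $k_1$ is continuous and strictly positive on the compact set $\SO2\times\SO2$. Hence there are constants $0<a\le b<\infty$ with $a\le k_1(R,\tilde R)\le b$ for all $R,\tilde R$. For $f\in\mathcal{K}^+$ set $I(f)\eqdef\int f\,d\mu>0$. Then
\[
a\,I(f)\le (T_1f)(R)\le b\,I(f)\quad\text{for all } R .
\]
So for $f,g\in\mathcal{K}^+$ we get $T_1 f\preceq \tfrac{b I(f)}{a I(g)}\,T_1 g$, which gives $M(T_1f,T_1g)\le \tfrac{bI(f)}{aI(g)}$. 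Similarly $m(T_1f,T_1g)\ge \tfrac{aI(f)}{bI(g)}$. Therefore $d_H(T_1f,T_1g)\le 2\log(b/a)$, and $\Delta(T_1)\le 2\log(b/a)<\infty$.

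\textbf{Step 3 (conclude).} Invoke Birkhoff's theorem to get $d_H(T_1f_1,T_1f_2)\le \tanh\!\left(\tfrac{\log(b/a)}{2}\right)d_H(f_1,f_2)$ for $f_1,f_2\in C_+(\SO2)$. This coefficient equals $\tfrac{b-a}{b+a}<1$.

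The main obstacle is Birkhoff's theorem itself, if it may not simply be cited. A self-contained route goes as follows. Given $f_1,f_2\in C_+$, set $m=m(f_1,f_2)$ and $M=M(f_1,f_2)$; these are attained as the min and max of $f_1/f_2$. Apply $T_1$ to the nonnegative functions $f_1-mf_2$ and $Mf_2-f_1$. Bound their images using the cross-ratio estimate from Step 2, then optimize over the resulting expression to recover the $\tanh$ factor. This is the standard Bushell computation. Even the cruder factor obtained directly is strictly less than $1$, and that suffices for the lemma.
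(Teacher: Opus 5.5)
Your proposal is correct and follows essentially the same route as the paper. The paper defers to Lemma~5 of \cite{Che2016}, which bounds the continuous, strictly positive kernel $k_1$ between constants $0<a\le b$ on the compact set $\SO2 \times \SO2$, deduces the finite projective diameter $\Delta(T_1)\le 2\log(b/a)$, and applies the Birkhoff--Bushell theorem to get $c=\tanh(\Delta(T_1)/4)\le (b-a)/(b+a)<1$. Your arithmetic for the constant and your use of positivity on all of $C_{\geq 0}(\SO2)\setminus\{0\}$ in Step~1 are sound.
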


\subsection{Conditions for Optimality and the Schrödinger System}
To obtain the first-order conditions for optimality, we start by considering the Lagrangian associated with \eqref{eq:SBP_on_SO(2)_varprob}
\begin{align}
\label{eq:Lagrangian}
\mathcal{L}(\rho,\Omega,\Psi) \;=\;\int_0^1\!\!\int_{\SO2} \Bigl\{ &\tfrac{1}{2}|\Omega|^2 \rho
\;+\;\Bigl(\partial_t \rho  +\operatorname{div}(\rho R \widehat{\Omega}) \notag \\ 
&- \tfrac{\sigma^2}{2}\Delta_{\SO2} \rho\Bigr)\Psi\Bigr\} \,d\mu \,dt,
\end{align}
where $\Psi : \SO2 \times [0,1] \to \mathbb{R}$ is a smooth Lagrange multiplier. Next, we define 
\begin{align}
\label{eq:P_01(SO2)}
&\mathcal{P}_{01}(\SO2) \notag \\ 
&\eqdef \biggl\{ \rho: \SO2 \times [0,1] \to \mathbb{R}_{\geq 0} \,\bigg|\, \rho(\cdot,0) = \rho_0, \rho(\cdot,1) = \rho_1,  \notag \\ 
&\;\;\;\;\;\text{for each }t \in [0,1],\int_{\SO2} \rho(R,t)\, d\mu(R) = 1\biggr\}.
\end{align}
Performing the unconstrained minimization of the Lagrangian $\mathcal{L}$ over $\mathcal{P}_{01}(\SO2) \times \mathcal{U}$, we find that the optimal pair $(\rho^{\mathrm{opt}}, \Omega^{\mathrm{opt}})$ that solves \eqref{eq:SBP_on_SO(2)_varprob} must satisfy the following system of coupled PDEs
\begin{subequations}
\label{eq:rhoOpt&psi_SO2}
\begin{align}
&\partial_t\Psi + \frac{1}{2}\left|\left(R(t)^{\top}\grad\Psi\right)^{\vee}\right|^2 = -\,\frac{\sigma^2}{2}\,\Delta_{\SO2}\Psi, 
\label{eq:1st_rhoOpt&psi_SO2} \\ 
&\partial_t\rho^{\mathrm{opt}} + \operatorname{div}(\rho^{\mathrm{opt}}\grad\Psi) = \frac{\sigma^2}{2}\,\Delta_{\SO2}\rho^{\mathrm{opt}}, 
\label{eq:2nd_rhoOpt&psi_SO2} \\
&\rho^{\mathrm{opt}}(\cdot,0)=\rho_0,\quad \rho^{\mathrm{opt}}(\cdot,1)=\rho_1,
\label{eq:3rd_rhoOpt&psi_SO2}
\end{align}
\end{subequations}
with 
\begin{equation}
\label{eq:Omega^{opt}} 
\Omega^{\mathrm{opt}}(R,t) = \left(R^{\top}\grad\Psi\left(R,t\right)\right)^{\vee}.
\end{equation}
Using the Hopf--Cole transform $(\rho^{\text{opt}}, \Psi) \mapsto (\varphi, \psi)$ given by 
\begin{subequations}
\label{eq:Hopf-Cole}
\begin{align}
&\varphi(R,t)=\exp\left(\frac{\Psi(R,t)} {\sigma^2}\right),\label{1st_Hopf-Cole}\\ 
&\psi(R,t) = \rho^{\mathrm{opt}}(R,t) \exp\left(-\frac{\Psi(R,t)}{\sigma^2}\right) 
\label{2nd_Hopf-Cole}
\end{align}
\end{subequations}
transforms the 
system \eqref{eq:rhoOpt&psi_SO2} into the pair of linear (forward and backward) heat equations
\begin{equation}
\label{eq:forward_backward_heat_eqns}
\partial_t\varphi = -\frac{\sigma^2}{2}\,\Delta_{\SO2}\varphi\,,
\quad \partial_t\psi = \frac{\sigma^2}{2}\,\Delta_{\SO2}\psi
\end{equation}
with coupled boundary conditions 
\begin{subequations}
\label{eq:coupled_bnd_cond_SO2}
\begin{align}
\varphi(\cdot,0)\,\psi(\cdot,0)=\rho_0\,,
\label{1st_coupled_bnd_cond_SO2} \\ 
\varphi(\cdot,1)\,\psi(\cdot,1)=\rho_1\,. 
\label{2nd_coupled_bnd_cond_SO2}
\end{align}
\end{subequations}
For notational simplicity, let 
\(\varphi_1 \eqdef \varphi(\cdot, t=1), \;\; \psi_0 \eqdef \psi(\cdot, t=0)\).
Then, we can express the solution of forward-backward heat equations \eqref{eq:forward_backward_heat_eqns} on $\SO2$ using the heat semigroup \eqref{eq:(T_t f)(R)} as \begin{subequations}
\label{eq:soln_heat_eqns_SO2}
\begin{align}
&\varphi(R,t) := (T_{1-t}\varphi_1)(R)\,, \label{1st_soln_heat_eqns_SO2} \\ 
&\psi(R,t) := (T_t\psi_0)(R)\,, 
\label{2nd_soln_heat_eqns_SO2}
\end{align}
\end{subequations} for all $R \in \SO2$ and $t \in [0,1]$.
Plugging the expression for $\varphi(\cdot,0)$ from \eqref{1st_soln_heat_eqns_SO2} into \eqref{1st_coupled_bnd_cond_SO2} and the expression for $\psi(\cdot,1)$ from \eqref{2nd_soln_heat_eqns_SO2} into \eqref{2nd_coupled_bnd_cond_SO2}, we get the following system of equations called the \emph{Schrödinger system on $\SO2$}:  
\begin{equation}
\label{eq:Schrödinger_sys_SO2}
\rho_0=\psi_0\,T_{1}\varphi_1\,,
\qquad \rho_1=\varphi_1\, T_1\psi_0\,.
\end{equation}
We shall now prove, as our main result of this paper, the existence of a solution $(\varphi_1,\psi_0)$ to this Schrödinger system on $\SO2$ in the following theorem. This solution is unique up to multiplication of $\varphi_1$ and division of $\psi_0$ by the same positive constant. Also, we restrict the time-1 heat operator \(T_1\) to the subset $C_{+}(\SO2)$ of $L^{2}(\SO2)$ and also take $C_{+}(\SO2)$ as the codomain of $T_1$, which is allowed because of Remark~\ref{rem:T_1_Dom&Codom_C+}. 

\begin{theorem}[Solution of the Schrödinger system on $\SO2$]\label{thm:Soln_Schrödinger_sys_SO2}
For the Lie group $\SO2$, let $T_1 : C_{+}(\SO2) \to C_{+}(\SO2)$\, be the time-1 heat operator and let $k_1$ be the smooth, strictly positive heat kernel of\/ $T_1$.  Assume that the initial and terminal PDFs $\rho_0$ and $\rho_1$, respectively, satisfy $\rho_0 \,,\, \rho_1 \in C_+(\SO2)$. Then there exist \(\varphi_1\,, \psi_0 \,\in C_+(\SO2)\) satisfying the following Schrödinger system of equations: 
\begin{equation}
\label{eq:Schrödinger_system}
\qquad
\psi_0 \;=\; \frac{\rho_0}{T_1\varphi_1},\qquad
\varphi_1 \;=\; \frac{\rho_1}{T_1\psi_0}.
\qquad
\end{equation}
Moreover, this solution is unique up to multiplication of $\varphi_1$ and division of $\psi_0$ by the same positive constant. 
\end{theorem}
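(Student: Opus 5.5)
The plan is the standard Fortet--Franklin fixed-point argument, carried out on the complete metric space $\left(C_{+}(\SO2) \cap U, d_H\right)$ of Lemma~\ref{lem:Complete_metric_space}. Eliminating $\psi_0$ from \eqref{eq:Schrödinger_system} gives a single equation for $\varphi_1$:
\[
\varphi_1 = \frac{\rho_1}{T_1\bigl(\rho_0/T_1\varphi_1\bigr)} = \bigl(R_{\rho_1}\circ T_1\circ R_{\rho_0}\circ T_1\bigr)(\varphi_1).
\]
Define $\mathcal{F} \eqdef R_{\rho_1}\circ T_1\circ R_{\rho_0}\circ T_1$ on $C_{+}(\SO2)$. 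Each factor maps $C_{+}(\SO2)$ into itself: for $T_1$ this is Remark~\ref{rem:T_1_Dom&Codom_C+}, and for $R_{\rho_0}$, $R_{\rho_1}$ it holds because quotients of strictly positive continuous functions are strictly positive and continuous.

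By Lemma~\ref{lem:R_f_isometry} the maps $R_{\rho_0}$ and $R_{\rho_1}$ are $d_H$-isometries, and by Lemma~\ref{lem:T_1_str_contraction} the map $T_1$ is a strict contraction with constant $c<1$. Hence
\[
d_H(\mathcal{F}f_1,\mathcal{F}f_2)\le c^2 d_H(f_1,f_2).
\]
To obtain a self-map of $C_{+}(\SO2)\cap U$, normalize and set $\tilde{\mathcal{F}}(f) \eqdef \mathcal{F}(f)/\|\mathcal{F}(f)\|_\infty$. By Remark~\ref{rem:d_H_properties}, $d_H$ is invariant under positive scalings, so $\tilde{\mathcal{F}}$ is still a $c^2$-contraction. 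Banach's fixed point theorem on the complete space from Lemma~\ref{lem:Complete_metric_space} then gives a unique $\hat\varphi \in C_{+}(\SO2)\cap U$ with $\mathcal{F}(\hat\varphi)=\lambda\hat\varphi$, where $\lambda=\|\mathcal{F}(\hat\varphi)\|_\infty>0$.

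The main obstacle is removing the eigenvalue $\lambda$, that is, showing $\lambda=1$ or rescaling it away. Two facts do this. First, $\mathcal{F}$ is homogeneous of degree one: $T_1$ is linear and $R_f(ag)=a^{-1}R_f(g)$, so $\mathcal{F}(a\varphi)=a\mathcal{F}(\varphi)$ for $a>0$. Rescaling $\hat\varphi$ therefore does not change $\lambda$. Second, we use mass balance. Set $\psi_0 \eqdef \rho_0/T_1\hat\varphi$. The heat kernel is symmetric, so $T_1$ is self-adjoint on $L^2(\SO2,\mu)$. Then
\[
\int \rho_1\,d\mu = \lambda\int \hat\varphi\, T_1\psi_0\,d\mu = \lambda\int \psi_0\, T_1\hat\varphi\,d\mu = \lambda\int\rho_0\,d\mu.
\]
Both $\rho_0$ and $\rho_1$ have unit mass, so $\lambda=1$. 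Thus $\varphi_1 \eqdef \hat\varphi$ and $\psi_0 = \rho_0/T_1\varphi_1$ are in $C_{+}(\SO2)$ and satisfy both equations in \eqref{eq:Schrödinger_system}.

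For uniqueness, let $(\varphi_1',\psi_0')$ be any solution in $C_{+}(\SO2)$. Then $\mathcal{F}(\varphi_1')=\varphi_1'$, so $\varphi_1'/\|\varphi_1'\|_\infty$ is a fixed point of $\tilde{\mathcal{F}}$ and equals $\hat\varphi$ by uniqueness of the Banach fixed point. Hence $\varphi_1'=a\varphi_1$ for some $a>0$. Substituting into the first equation gives $\psi_0'=\rho_0/(aT_1\varphi_1)=\psi_0/a$. This is exactly the claimed uniqueness up to multiplying $\varphi_1$ and dividing $\psi_0$ by the same positive constant.
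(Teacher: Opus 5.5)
Your proposal is correct and follows the paper's proof essentially step for step: the same map $R_{\rho_1}\circ T_1\circ R_{\rho_0}\circ T_1$, the same $c^2$-contraction estimate, normalization onto $C_{+}(\SO2)\cap U$, Banach's fixed point theorem, and the same mass-balance computation via self-adjointness of $T_1$ to force $\lambda=1$. Your explicit degree-one homogeneity argument for uniqueness fills in a step the paper only asserts, namely that any solution's $\varphi_1$, after normalization, is a fixed point of the normalized map.
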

\begin{proof}
Plugging the expression for $\psi_0$ from the first equation in \eqref{eq:Schrödinger_system} into the second, we get  
\begin{equation}
\label{eq:varphi_eq}
\varphi_1 \;=\; \frac{\rho_1}{T_1\left(\frac{\rho_0}{T_1(\varphi_1)}\right)}.
\end{equation}Looking at \eqref{eq:varphi_eq}, we define a map $F :  C_{+}(\SO2) \to C_{+}(\SO2) $ as follows: 
$
    F \eqdef R_{\rho_{1}} \circ T_{1} \circ R_{\rho_{0}} \circ T_{1} ,
$
where the maps $R_{\rho_{0}}$ and $R_{\rho_{1}}$ are defined in Lemma~\ref{lem:R_f_isometry}. Then for any $f_1, f_2 \in C_{+}(\SO2)$, we have the following:
\begin{align}
    &d_H(F(f_1),F(f_2)) \notag \\ 
    &= d_H\Big(\,R_{\rho_{1}} \big(\,T_{1} ( R_{\rho_{0}} ( T_{1}
(f_1)))\,\big) \, , \, R_{\rho_{1}}\big(\,T_{1} ( R_{\rho_{0}} ( T_{1}
(f_2)))\,\big)\,\Big) \notag \\
    &\overset{\mathrm{Lemma}~\ref{lem:R_f_isometry}}{=} d_H\Big(\,T_{1}\big(\, R_{\rho_{0}} ( T_{1}
    (f_1))\,\big)\, , \,T_{1}\big(\,R_{\rho_{0}} ( T_{1}
    (f_2))\,\big)\,\Big) \notag \\
    &\overset{\mathrm{Lemma}~\ref{lem:T_1_str_contraction}}{\leq} c\,\,d_H\Big(\, R_{\rho_{0}}\big(\, T_{1}
    (f_1)\,\big)\, , \,R_{\rho_{0}}\big(\, T_{1}
    (f_2)\,\big)\,\Big) \notag \\
    &\overset{\mathrm{Lemma}~\ref{lem:R_f_isometry}}{=} c\,\,d_H(T_{1}
    (f_1), T_{1}
    (f_2)) \notag \\
    &\overset{\mathrm{Lemma}~\ref{lem:T_1_str_contraction}}{\leq} c^2\,\,d_H(f_1, f_2).
    \label{F_strict_contraction}
\end{align}
Since $c \in [0,1)$ (from Lemma~\ref{lem:T_1_str_contraction}) implies $c^2 \in [0,1)$, the above means that the map $F$ is a strict contraction on $C_{+}(\SO2)$ with respect to the Hilbert's projective metric $d_H$\,. 

Let $U$ denote the unit sphere in the Banach space $\left(C(\SO2), \|\cdot\|_{\infty} \right)$ and define a map $\tilde{F} : C_{+}(\SO2) \cap U \to C_{+}(\SO2) \cap U$ by   
$
\tilde{F}(\phi) \eqdef \frac{F(\phi)}{\| F(\phi) \|_{\infty}}.
$ Note that for any $f_1, f_2 \in  C_{+}(\SO2) \cap U $, we have    
\begin{align*}
d_H\left(\tilde{F}(f_1), \tilde{F}(f_2)\right) &= d_H\left(\frac{F(f_1)}{\| F(f_1) \|_{\infty}} , \frac{F(f_2)}{\| F(f_2) \|_{\infty}}\right) \\ 
&\overset{\mathrm{Remark}~\ref{rem:d_H_properties}}{=} d_H\left(F(f_1), F(f_2)\right) \\ 
&\overset{\mathrm{by\,}\eqref{F_strict_contraction}}{\leq} c^2\,\,d_H(f_1, f_2).  
\end{align*}
With $c^2 \in [0,1)$, the above means that $\tilde{F}$ is also a strict contraction on its domain $C_{+}(\SO2)\, \cap\, U$ with respect to $d_H$\,.

Now, consider any function $\phi_0 \in C_{+}(\SO2)$. We can choose this function in $C_{+}(\SO2)$ because $C_{+}(\SO2)$ is nonempty (see Remark~\ref{rem:C+_nonempty}). We then define $\phi_k \eqdef \tilde{F}^{k}\left(\frac{\phi_0}{\| \phi_0 \|_{\infty}}\right), \,k \geq 1$, where $\tilde{F}^{k} = \underbrace{ \tilde{F} \circ \tilde{F} \circ ... \circ \tilde{F}}_{k}$\,. Since $\tilde{F}$ is, with respect to $d_H$\,, a strict contraction on $C_{+}(\SO2)\, \cap\, U$ which is a complete metric space with respect to $d_H$ (see Lemma~\ref{lem:Complete_metric_space}), by the Banach fixed point theorem, the map $\tilde{F}$ has a unique fixed point in $C_{+}(\SO2)\, \cap\, U$, that is, there exists a unique function $\phi^{*} \in C_{+}(\SO2)\, \cap\, U$ such that $\tilde{F}(\phi^{*}) = \phi^{*}$\,. This $\phi^{*}$ is the limit of the sequence $\{\phi_{k}\}_{k \geq 1}$ in $\left(C_{+}(\SO2)\, \cap\, U\,,\,d_H\right)$, that is, $\phi_k \xrightarrow[]{d_H} \phi^{*}$ in $C_{+}(\SO2)\, \cap\, U$\,, which means that $\lim_{k \to \infty} d_H(\phi_k,\phi^{*}) = 0$.

We now finally show that $F(\phi^{*}) = \phi^{*}$\,. Since $\tilde{F}(\phi^{*}) = \phi^{*}$\,, we get $F(\phi^{*}) = \lambda\, \phi^{*}$\,, where $\lambda = \| F(\phi^{*}) \|_{\infty} > 0$\,. Define the inner product $\langle\cdot,\cdot\rangle : C(\SO2) \times C(\SO2) \to \mathbb{R}$ by 
$
\langle f,g\rangle \eqdef \int_{\SO2} f(R)\,g(R)\,d\mu(R).
$ 
Since $\langle \phi, R_{\rho_{1}}(\phi) \rangle = \int_{\SO2} \rho_1(R)\, d\mu(R) = 1$ for any $\phi \in C_{+}(\SO2)$\,, we have the following: 
\begin{align*}
1 &= \int_{\SO2} \rho_1 d\mu(R) \\  
  &= \langle \,\,(T_1 \circ R_{\rho_0} \circ T_1)(\phi^{*}) \,\,,\, (\underbrace{R_{\rho_1} \circ T_1 \circ R_{\rho_0} \circ T_1}_{F})(\phi^{*}) \,\,\rangle \\
  &= \langle \,\,(T_1 \circ R_{\rho_0} \circ T_1)(\phi^{*}) \,\,,\, \lambda\, \phi^{*}\,\,\rangle \\
  &=\langle\,\, (R_{\rho_0} \circ T_1)(\phi^{*}) \,\,,\,T_1(\lambda \phi^{*}) \,\,\rangle \\
  &=\langle\,\, R_{\rho_0}\left( T_1(\phi^{*})\right) \,\,,\,\lambda\,T_1(\phi^{*}) \,\,\rangle \\
  &=\int_{\SO2} \rho_0(R)\, \lambda\,d\mu(R) 
  =\lambda \cdot 1 = \lambda\,,  
\end{align*}
where in the third step above, we used $F(\phi^{*}) = \lambda\, \phi^{*}$.
Hence, $F(\phi^{*}) = \lambda\, \phi^{*} = \phi^{*}$\,, and so, we have  
\begin{align}
\label{eq:sol_varphi}
 \phi^{*} &= F(\phi^{*})  
 = \frac{\rho_1}{T_1\left( \frac{\rho_0}{T_1(\phi^{*})}\right)}.
\end{align} Comparing \eqref{eq:sol_varphi} with \eqref{eq:varphi_eq}, we see that $\phi^{*}$ solves \eqref{eq:varphi_eq}, and so, 
\begin{equation}
\label{eq:Schrödinger_sys_soln} \qquad
\varphi_1 \;=\; \phi^{*},\qquad
\psi_0 \;=\; \frac{\rho_0}{T_1\phi^{*}}\qquad
\end{equation} is a solution of the Schrödinger system \eqref{eq:Schrödinger_system}. The uniqueness of this solution up to multiplication of $\varphi_1$ and division of $\psi_0$ by the same positive constant follows from the uniqueness of $\phi^{*}$. This completes the proof.  
\end{proof}

\begin{remark}
With the solution $(\varphi_1,\psi_0)$ (to the Schrödinger system \eqref{eq:Schrödinger_sys_SO2} on $\SO2$) obtained, we use \eqref{eq:soln_heat_eqns_SO2} to get $\varphi(R,t)$ and $\psi(R,t)$. Then, multiplying \eqref{1st_Hopf-Cole} with \eqref{2nd_Hopf-Cole} gives  
\begin{equation}
\label{eq:rho_Opt}
\rho^{\mathrm{opt}}(R,t) = \varphi(R,t)\,\psi(R,t).
\end{equation}
Also, now with $\varphi(R,t)$ known, we have from \eqref{1st_Hopf-Cole}  
$
\Psi(R,t) = \sigma^2 \log \varphi(R,t). 
$
Plugging this expression for $\Psi(R,t)$ into \eqref{eq:Omega^{opt}}, we finally have 
\begin{equation}
\label{eq:Omega^{opt}_soln}
\Omega^{\mathrm{opt}}(R,t) = \left(\sigma^2 R^{\top}\grad\log \varphi(R,t) \right)^{\vee}.
\end{equation}
This controller $\Omega^{\mathrm{opt}}$ is the geometric coordinate-free solution to our SBP for the kinematic equation on $\SO2$ (Problem~\ref{problem:SBP_on_SO(2)}).    
\end{remark}

\section{Numerical Simulation}
\label{sec:numerical_simulations}
\begin{figure}
    \centering
    \includegraphics[scale = 0.3]{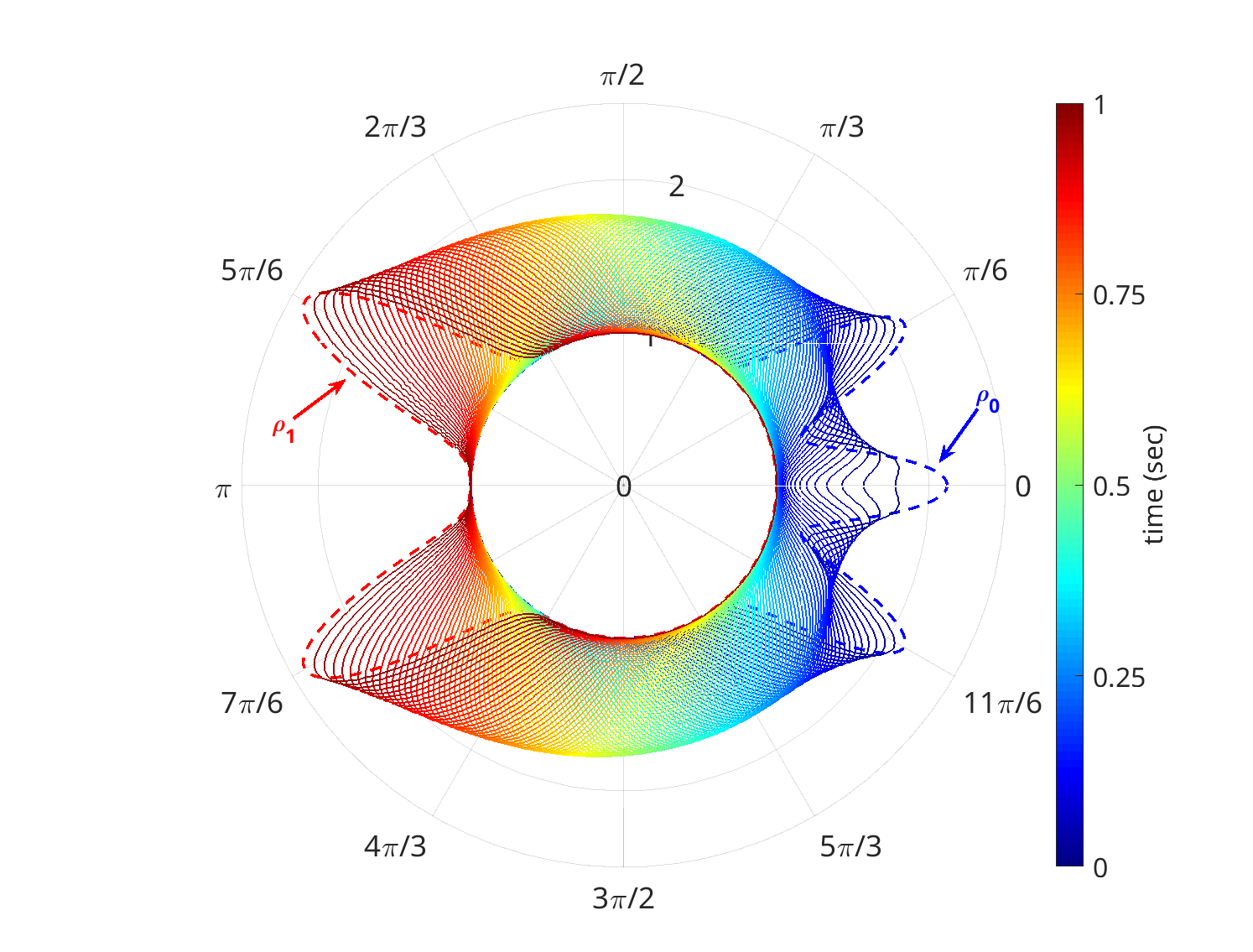}
    \caption{SBP with initial PDF of three peaks and terminal PDF of two peaks}
    \label{fig:SBP-peaks_threeTOtwo}
    \vspace{0.2in}
\end{figure}
In this section, we present a numerical simulation of SBP for the kinematic equation on $\SO2$ using a stabilized (log-domain) version of the Sinkhorn (or IPFP) algorithm implemented via the Fast Fourier Transform (FFT). 
Because of space limitations, we provide only one simulation result; more tested examples can be found on the \texttt{Git}~\footnote{\label{footnote:code_link}
Code and animations at 
\href{https://gitlab.com/a5akhtar/sbp}
{https://gitlab.com/a5akhtar/sbp}} page.

The circle $\SO2$ is discretized uniformly into $N=1024$ grid points; the choice
of such a value of $N$ prevents
spurious oscillations arising from the Gibbs phenomenon. 
The isotropic heat kernel $K_t$ on $\SO2$ admits the following Fourier representation:
$
K_t(\theta) = \frac{1}{2\pi}\sum_{k\in\mathbb{Z}} e^{-\frac{1}{2}\sigma^2 k^2 t} e^{ik\theta},
$
where $\sigma = 0.43$ is the diffusion strength. Numerically, the convolution ($*$) with $K_t$ can be
realized via Fast Fourier Transform (FFT) as multiplication by spectral
multipliers $\lambda_t(k) = \exp(-\tfrac{1}{2}\sigma^2 k^2 t)$.

The intermediate PDFs $\rho_t(\theta)$ in Figure~\ref{fig:SBP-peaks_threeTOtwo} are plotted on a polar coordinate system, where the value of the PDF at any angle $\theta$ is represented by the height of the curve above the unit circle at that angle.  
The simulation test considers the von Mises distribution whose PDF is given by   
$
\rho(\theta) = \frac{1}{2\pi I_0(\kappa)} \, e^{\kappa \cos(\theta - \mu)},
$
where $\mu$ is the mean of the distribution, $\kappa > 0$ is a concentration parameter, and $I_0(\kappa)$ is the modified Bessel function of the first kind of order 0. The evolution of the intermediate densities in Fig.~\ref{fig:SBP-peaks_threeTOtwo} illustrates how the three peaks smoothly redistribute to become two under the Schrödinger bridge dynamics on $\SO2$. For further details, readers are referred to our $\texttt{Git}^{~\ref{footnote:code_link}}$ page. 

\section{Conclusion}
\label{sec:Conclusion}
In this work, we developed a geometric, coordinate-free solution to the isotropic Schrödinger bridge problem for the kinematic equation on the Lie group $\SO2$. By formulating the problem as a stochastic optimal control problem with angular velocity as the control input, we established existence and uniqueness of a solution to the associated Schrödinger system via a contractive fixed-point recursion with respect to the Hilbert’s projective metric. The proposed approach exploits only the intrinsic geometric structure of $\SO2$, avoiding any embedding in the Euclidean plane. The numerical simulation demonstrates that the algorithm successfully steers probability densities between prescribed marginals, thereby validating the geometric solution of the Schrödinger system on the unit circle.

\section{Appendix}
\label{sec:Appendix}
\subsection{Proofs}
\begin{proof}[Proof of Lemma~\ref{lem:intC{geq 0}}]
See the proof of Lemma 1 in~\cite{mahmood2026}.
\end{proof}
\begin{proof}[Proof of Lemma~\ref{lem:Complete_metric_space}]
See Theorem 4.1, Theorem 4.2, and Theorem 4.4 in~\cite{Bus1973}.    \end{proof}

\begin{proof}[Proof of Lemma~\ref{lem:R_f_isometry}]
Since multiplication by a fixed $f \in C_{+}(\SO2)$ is an isometry with respect to the Hilbert's projective metric $d_H$ and $M(\mathbf{x}_1,\mathbf{x}_2) = (m(\mathbf{x}_1^{-1},\mathbf{x}_2^{-1} ))^{-1}$, the statement of the Lemma follows.
\end{proof}

\begin{proof}[Proof of Lemma~\ref{lem:T_1_str_contraction}]
The proof is similar to that of Lemma 5 in~\cite{Che2016}. 
\end{proof}

\bibliographystyle{IEEEtran}
\bibliography{myreferences}

\end{document}